 \newtheorem{thm}{Theorem}[section]
 \newtheorem{prop}[thm]{Proposition}    
 \newtheorem{ex}[thm]{Example}
\theoremstyle{definition}
 \numberwithin{equation}{section}
\newcommand{\h}{{\mathcal H}}
\newcommand{\mn}{\mathbb N}
\begin{document}

\title{On a characterization of Riesz bases \\ via biorthogonal sequences}

\author{Diana T. Stoeva}

\maketitle 

\begin{abstract}
 It is well known that a sequence in a Hilbert space is a Riesz basis if and only if it is a complete Bessel sequence with biorthogonal sequence which is also a complete Bessel sequence. Here we prove that the completeness of one (any one) of the biorthogonal sequences can be omitted in the characterization. 
\end{abstract}

{\bf Keywords:} 
Riesz basis, biorthogonal sequence, complete sequence, 
 Bessel sequence

{\bf MSC 2010:} 42C15,  46B15

\section{Introduction}
A {\it Riesz basis} for a separable Hilbert space $\h$ is a sequence of the form  $(Ve_k)_{k=1}^\infty$ with $(e_k)_{k=1}^\infty$ being an orthonormal basis for $\h$ and $V$ being a bounded bijective operator from $\h$ onto $\h$.  
 Riesz bases were introduced by Bari \cite{Bari46, Bari51} and already in \cite{Bari51} many properties and equivalent characterizations were determined.
Below we collect the standard equivalences of Riesz bases from \cite{Cbook,Hbook,Ybook}, some of which appeared already in \cite{Bari51, Gelfand51}:

\begin{thm} \label{rbequiv}
 For a sequence $(f_k)_{k=1}^\infty$ in a Hilbert space $\h$, the following conditions are equivalent:

\begin{itemize}

\item[{\rm ($\mathcal{R}_1$)}] $(f_k)_{k=1}^\infty$ forms a Riesz basis for $\h$.

\item[{\rm ($\mathcal{R}_2$)}] There is an equivalent inner product on $\h$ (i.e., an inner product generating an equivalent norm on $\h$), with respect to which  $(f_k)_{k=1}^\infty$ is an orthonormal basis for $\h$.

\item[{\rm ($\mathcal{R}_3$)}]
 $(f_k)_{k=1}^\infty$ is complete in $\h$ and there exist positive constants $A$ and $B$ so that 
$$A\sum |c_k|^2 
\leq \|\sum c_k f_k \|^2 
\leq B\sum |c_k|^2 $$
for every finite scalar sequence $(c_k)$ (and hence for every $(c_k)_{k=1}^\infty\in\ell^2$).

\item[{\rm ($\mathcal{R}_4$)}]
 $(f_k)_{k=1}^\infty$ is complete in $\h$ and its Gram matrix 
$(\langle f_k, f_j\rangle)_{j,k=1}^\infty$
determines a bounded bijective operator on $\ell^2$. 

\item[{\rm ($\mathcal{R}_5$)}]
$(f_k)_{k=1}^\infty$ is complete in $\h$ and it has a complete biorthogonal sequence $(g_k)_{k=1}^\infty$ so that 
$ \sum_{k=1}^\infty |\langle f, f_k\rangle|^2 <\infty \mbox{ and } 
\sum_{k=1}^\infty |\langle f, g_k\rangle|^2 <\infty$ for every $f\in\h$. 

\item[{\rm ($\mathcal{R}_6$)}]
 $(f_k)_{k=1}^\infty$ is a complete Bessel sequence in $\h$ and it has a biorthogonal sequence $(g_k)_{k=1}^\infty$ which is also a complete Bessel sequence in $\h$.

\item[{\rm ($\mathcal{R}_7$)}]
 $(f_k)_{k=1}^\infty$ is a bounded unconditional basis for $\h$.

\item[{\rm ($\mathcal{R}_8$)}]
$(f_k)_{k=1}^\infty$ is a basis for $\h$ such that
$\sum_{k=1}^\infty c_k f_k$ converges in $\h$ if and only if $\sum_{k=1}^\infty |c_k|^2<\infty$.

\end{itemize}
\end{thm}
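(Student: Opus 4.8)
The plan is to prove the eight conditions equivalent by using the explicit description in $(\mathcal{R}_1)$ as a hub, grouping the routine steps (the ones recorded in \cite{Cbook,Hbook,Ybook}) and isolating the single genuinely nontrivial one. First I would close the block $(\mathcal{R}_1)\Leftrightarrow(\mathcal{R}_2)\Leftrightarrow(\mathcal{R}_3)\Leftrightarrow(\mathcal{R}_4)$ by a short cycle of elementary operator theory. For $(\mathcal{R}_1)\Rightarrow(\mathcal{R}_2)$, transport the inner product along $V^{-1}$, i.e. put $\langle f,g\rangle_1:=\langle V^{-1}f,V^{-1}g\rangle$; boundedness of $V$ and $V^{-1}$ makes this norm equivalent, and $(f_k)=(Ve_k)$ becomes orthonormal and complete for it. For $(\mathcal{R}_2)\Rightarrow(\mathcal{R}_3)$, apply Parseval in the equivalent inner product and compare the two norms. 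For $(\mathcal{R}_3)\Rightarrow(\mathcal{R}_4)$, the synthesis map $T\colon(c_k)\mapsto\sum c_kf_k$ is a bounded bijection $\ell^2\to\h$ (bounded above by $\sqrt{B}$, bounded below by $\sqrt{A}$ hence with closed range, and with dense range by completeness), and the Gram matrix represents $T^{*}T$. For $(\mathcal{R}_4)\Rightarrow(\mathcal{R}_1)$, boundedness of the Gram operator $G$ already forces $T$ to be bounded on $\ell^2$ (since $\langle Gc,c\rangle=\|\sum c_kf_k\|^2$), $T^{*}T=G$ by polarization, bijectivity of $G$ gives injectivity and closed range of $T$, completeness makes $T$ onto, and composing $T$ with the unitary $e_k\mapsto\delta_k$ exhibits $(f_k)$ in the form $(Ve_k)$.

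Next I would dispatch $(\mathcal{R}_5)$, $(\mathcal{R}_6)$, $(\mathcal{R}_8)$ and their link to the hub. By the uniform boundedness principle a sequence $(h_k)$ with $\sum|\langle f,h_k\rangle|^2<\infty$ for every $f$ is automatically Bessel, so $(\mathcal{R}_5)$ and $(\mathcal{R}_6)$ say the same thing. Given $(\mathcal{R}_6)$, biorthogonality together with the Bessel property reads $T_g^{*}T_f=I$ and $T_f^{*}T_g=I$ on $\ell^2$ for the two synthesis operators; hence $T_f$ is injective with closed range, completeness of $(f_k)$ makes it onto, so $T_f\colon\ell^2\to\h$ is a bounded bijection and $(f_k)$ is a Riesz basis. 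Conversely $(\mathcal{R}_1)\Rightarrow(\mathcal{R}_5)$ with $g_k:=(V^{-1})^{*}e_k$: biorthogonality is immediate, both sequences are Riesz bases hence complete, and $\sum|\langle f,f_k\rangle|^2=\|V^{*}f\|^2$, $\sum|\langle f,g_k\rangle|^2=\|V^{-1}f\|^2$ are finite. For $(\mathcal{R}_8)$: if $(f_k)=(Ve_k)$ then, $V$ being a homeomorphism, $\sum c_kf_k$ converges iff $\sum c_ke_k$ does iff $(c_k)\in\ell^2$; conversely $(\mathcal{R}_8)$ makes $T\colon(c_k)\mapsto\sum c_kf_k$ a well-defined bijection $\ell^2\to\h$ with closed graph (the coordinate functionals on $\ell^2$ and the coefficient functionals of the basis are continuous), so $T$ and $T^{-1}$ are bounded and $(\mathcal{R}_1)$ follows as above.

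The last part, $(\mathcal{R}_7)\Leftrightarrow(\mathcal{R}_1)$, contains the only step I do not expect to be bookkeeping. The direction $(\mathcal{R}_1)\Rightarrow(\mathcal{R}_7)$ is again read off the explicit form: a bounded bijection carries the orthonormal basis $(e_k)$, which is a bounded unconditional basis, to one of the same kind. For $(\mathcal{R}_7)\Rightarrow(\mathcal{R}_3)$ I would symmetrize over random signs. Unconditionality of the basis furnishes a constant $C$ with $\|\sum\varepsilon_kc_kf_k\|\le C\|\sum c_kf_k\|$ for every finite $(c_k)$ and every $\varepsilon_k=\pm 1$; averaging the square of the left-hand side over all sign patterns kills the cross terms and leaves $\sum|c_k|^2\|f_k\|^2$. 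Since $\sum c_kf_k=\sum\varepsilon_k(\varepsilon_kc_k)f_k$, the same inequality also runs in the other direction, so combining the two averaged estimates with $0<\inf_k\|f_k\|\le\sup_k\|f_k\|<\infty$ (boundedness of the basis) yields precisely the two-sided bound of $(\mathcal{R}_3)$, completeness of $(f_k)$ being free since it is a basis. I expect this averaging argument to be the main obstacle; the rest reduces to manipulations with synthesis/analysis operators and the closed graph theorem.
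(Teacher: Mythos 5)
The paper offers no proof of Theorem \ref{rbequiv}: it is stated as a compilation of known equivalences, with the reader referred to \cite{Cbook,Hbook,Ybook} (and \cite{Bari51,Gelfand51}), so there is no in-paper argument to compare against. Your outline is correct and is essentially the standard treatment found in those texts: the hub around ($\mathcal{R}_1$), the transported inner product $\langle V^{-1}\cdot,V^{-1}\cdot\rangle$ for ($\mathcal{R}_2$), the identification of the Gram matrix with $T^{*}T$ and the identity $\langle Gc,c\rangle=\|\sum c_kf_k\|^2$ for ($\mathcal{R}_3$)$\Leftrightarrow$($\mathcal{R}_4$), the uniform-boundedness reduction of ($\mathcal{R}_5$) to ($\mathcal{R}_6$), the closed-graph argument for ($\mathcal{R}_8$), and the random-sign averaging for ($\mathcal{R}_7$)$\Rightarrow$($\mathcal{R}_3$) all check out. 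It is worth noting that your step ($\mathcal{R}_6$)$\Rightarrow$($\mathcal{R}_1$), based on $T_G^{*}T_F=I$ on $\ell^2$ (lower bound, hence injectivity and closed range) plus completeness of $(f_k)_{k=1}^\infty$ for density of the range, is the mirror image of the argument the paper does spell out in Theorem \ref{th1}, which instead derives $T_FT_G^{*}=I$ on $\h$ from completeness of $(g_k)_{k=1}^\infty$ alone; that asymmetry is exactly what lets the paper drop one completeness hypothesis, whereas under the full hypotheses of ($\mathcal{R}_6$) either factorization works. The only places where you lean on nontrivial background without proof are the continuity of the coefficient functionals of a Schauder basis (needed for ($\mathcal{R}_8$)$\Rightarrow$($\mathcal{R}_1$)) and the existence of a finite uniform unconditionality constant $C$ (needed for ($\mathcal{R}_7$)$\Rightarrow$($\mathcal{R}_3$)); both are standard facts requiring a Baire-category/uniform-boundedness argument, so they are genuine external inputs but not gaps.
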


The purpose of this paper is to show that one may remove the condition for completeness of  one (any one) of the sequences $(f_k)_{k=1}^\infty$ and $(g_k)_{k=1}^\infty$  in ($\mathcal{R}_6$) (and respectively in ($\mathcal{R}_5$)).

\vspace{.1in}
Let us end the section with some notation and needed statements. 
In the entire paper, $\h$ denotes a separable Hilbert space and $(e_k)_{k=1}^\infty$ denotes an orthonormal basis for $\h$. Recall that 
a sequence $(f_k)_{k=1}^\infty$ with elements from $\h$ is called a {\it Bessel sequence in $\h$} if there exists a positive constant $B$ so that 
$\sum_{k=1}^\infty |\langle h, f_k\rangle|^2 \leq B\|h\|^2$ for every $h\in\h$. 
Given a Bessel sequence $F=(f_k)_{k=1}^\infty$ in $\h$, the synthesis operator $T_F$ given by $T_F (c_k)=\sum_{k=1}^\infty c_k f_k$ is well defined and bounded from $\ell^2$ into $\h$ (see, e.g., \cite[Theorem 3.2.3]{Cbook}).

\section{Removing a condition from ($\mathcal{R}_6$)}

In this section we obtain a characterization of Riesz bases with relaxed conditions compare to the ones in ($\mathcal{R}_6$). 
We show that one (any one) of the asumptions for completeness in ($\mathcal{R}_6$) can be omitted. 
First let us prove that one can remove the assumption for completeness of $(f_k)_{k=1}^\infty$:

\begin{thm}\label{th1}
 For a sequence $(f_k)_{k=1}^\infty$ in $\h$, the following conditions are equivalent:
\begin{itemize}

\item[{\rm (i)}] $(f_k)_{k=1}^\infty$ is a Riesz basis for H.

\item[{\rm (ii)}]  $(f_k)_{k=1}^\infty$ is a Bessel sequence in $\h$ and it has a biorthogonal sequence $(g_k)_{k=1}^\infty$ which is 
a complete Bessel sequence in $\h$.
\end{itemize}
\end{thm}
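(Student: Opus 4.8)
\emph{The plan.} Since $(f_k)_{k=1}^\infty$ and $(g_k)_{k=1}^\infty$ are Bessel sequences, their synthesis operators $T_F,T_G\colon\ell^2\to\h$ are well defined and bounded, and hence so are the adjoint (analysis) operators $T_F^*,T_G^*\colon\h\to\ell^2$, given by $T_F^*h=(\langle h,f_k\rangle)_{k=1}^\infty$ and $T_G^*h=(\langle h,g_k\rangle)_{k=1}^\infty$. The implication (i)$\Rightarrow$(ii) requires nothing new: it is contained in ($\mathcal R_1$)$\Rightarrow$($\mathcal R_6$) of Theorem~\ref{rbequiv}, because the (unique) biorthogonal sequence of a Riesz basis is its dual Riesz basis, which is in particular a complete Bessel sequence. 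So the whole content is the implication (ii)$\Rightarrow$(i).

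For (ii)$\Rightarrow$(i), the first step is to encode biorthogonality as an operator identity. For every $c=(c_k)_{k=1}^\infty\in\ell^2$, using the $\ell^2$-convergence of $\sum_j c_jf_j$ and continuity of the inner product,
\[
(T_G^*T_Fc)_k=\Big\langle \sum_j c_jf_j,\,g_k\Big\rangle=\sum_j c_j\langle f_j,g_k\rangle=\sum_j c_j\delta_{jk}=c_k ,
\]
so $T_G^*T_F=I_{\ell^2}$; in particular $T_F$ is injective. The key step is then to examine the operator $P:=T_FT_G^*\colon\h\to\h$. It is bounded, and from $T_G^*T_F=I_{\ell^2}$ we get $P^2=T_F(T_G^*T_F)T_G^*=T_FT_G^*=P$, so $P$ is a bounded idempotent. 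Moreover $P$ is injective: $Ph=0$ forces $T_G^*h\in\ker T_F=\{0\}$, hence $T_G^*h=0$, and since $(g_k)_{k=1}^\infty$ is complete $T_G^*$ is injective, so $h=0$. But an injective idempotent on a vector space is the identity (from $P(h-Ph)=0$ one gets $h-Ph\in\ker P=\{0\}$), so $P=I_\h$, i.e.\ $T_FT_G^*=I_\h$.

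Combining $T_G^*T_F=I_{\ell^2}$ with $T_FT_G^*=I_\h$ shows that $T_F\colon\ell^2\to\h$ is a bounded bijection with bounded inverse $T_G^*$. Consequently $(f_k)_{k=1}^\infty$ is complete (as $T_F$ is onto) and satisfies $\|T_G^*\|^{-2}\sum|c_k|^2\le\|\sum c_kf_k\|^2\le\|T_F\|^2\sum|c_k|^2$, so ($\mathcal R_3$) of Theorem~\ref{rbequiv} holds and $(f_k)_{k=1}^\infty$ is a Riesz basis. The only point that is not pure routine is recovering the completeness of $(f_k)_{k=1}^\infty$ — precisely the hypothesis dropped from ($\mathcal R_6$) — and this is exactly what the idempotent $P$ supplies for free: its injectivity, forced by the completeness of $(g_k)_{k=1}^\infty$, upgrades it to the identity and thereby makes $T_F$ surjective.
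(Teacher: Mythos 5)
Your proof is correct and follows essentially the same route as the paper: both arguments reduce to showing that $T_F$ is a bounded bijection by establishing $T_G^*T_F=I_{\ell^2}$ from biorthogonality and $T_FT_G^*=I_\h$ from the completeness of $(g_k)_{k=1}^\infty$. Your packaging of the second identity via the injective idempotent $P=T_FT_G^*$ is only a cosmetic variation of the paper's direct verification that $\langle T_FT_G^*f-f,g_j\rangle=0$ for all $j$, and your final appeal to ($\mathcal{R}_3$) in place of the definition of a Riesz basis is equally routine.
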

\begin{proof} 
 (ii) $\Rightarrow$ (i): 
Let $T_F$ and $T_G$ denote the synthesis operators of the Bessel sequences
$(f_k)_{k=1}^\infty$ and $(g_k)_{k=1}^\infty$, respectively. 

First observe that $T_F$ is injective. Indeed, if $T_F(c_k)_{k=1}^\infty=0$ for some $(c_k)_{k=1}^\infty\in\ell^2$, then for every $j\in{\mathbb N}$ we have
$0=\langle \sum_{k=1}^\infty c_k f_k, g_j\rangle = c_j$.

For the surjectivity of $T_F$, take an arbitrary $f\in \h$ and observe that for every $j\in\mathbb N$ one has
\begin{equation}\label{fg1}
\langle T_F T_G^*f, g_j\rangle = 
\sum_{k=1}^\infty \langle f, g_k\rangle \langle f_k, g_j\rangle 
=\langle f, g_j\rangle, 
\end{equation}
which by the completeness of $(g_j)_{j=1}^\infty$ imples that $f= T_F T_G^*f$. 
Thus, $T_F$ is surjective.

For every $k\in \mathbb N$,  $f_k=T_F\delta_k$. Then $(f_k)_{k=1}^\infty$ 
is the image of an orthonormal basis under a bounded bijective operator, which is precisely 
the definition of a Riesz basis. 

 (i) $\Rightarrow$ (ii): follows from Theorem \ref{rbequiv}.
\end{proof}

Let us now consider relations between biorthogonality and completeness. 
It is welll known that completeness of one of two biorthogonal sequences does not imply  completeness of the other one:

\begin{ex} \label{exYoung} {\rm \cite{Young81}}
Let $\h$ be infinite-dimensional. Take an infinite-dimensional closed proper subspace $K$ of $\h$, an orthonormal basis $(g_k)_{k=1}^\infty$ for $K$, and an orthonormal basis $\mathcal{B}$ for $K^\perp$. Let $(y_k)_{k=1}^\infty$ be a sequence which contains every element of 
 $\mathcal{B}$ infinitely many times. Consider $f_k:=g_k+y_k$, $k\in\mn$. 
Then   $(f_k)_{k=1}^\infty$ is complete in $\h$, $(f_k)_{k=1}^\infty$ and $(g_k)_{k=1}^\infty$ are biorthogonal, and clearly  $(g_k)_{k=1}^\infty$ is not complete in $\h$.  As a simple concrete case, consider  $(g_k)$ being $(e_n)_{n=2}^\infty$ and $(f_k)$ being $(e_n+e_1)_{n=2}^\infty$.
\end{ex}
 
Here we observe that adding the Bessel assumption leads to a different conclusion regarding completeness. We show that Bessel biorthogonal sequences are either both complete or both incomplete:

\begin{prop} \label{prop1} Let $(f_k)_{k=1}^\infty$ and $(g_k)_{k=1}^\infty$ be Bessel sequences in $\h$ which are biorthogonal. If one of them is complete in $\h$, then the other one is also complete in $\h$.
\end{prop}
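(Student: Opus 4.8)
The plan is to deduce the proposition from Theorem~\ref{th1} together with the symmetry of the hypotheses. The relation of biorthogonality, $\langle f_k,g_j\rangle=\delta_{k,j}$, is symmetric in the two sequences, and so is the assumption that both sequences are Bessel; hence it suffices to prove the single implication ``if $(g_k)_{k=1}^\infty$ is complete in $\h$, then $(f_k)_{k=1}^\infty$ is complete in $\h$'', the reverse implication then following simply by interchanging the names of the two sequences.

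To establish that single implication I would observe that, under these hypotheses, $(f_k)_{k=1}^\infty$ is a Bessel sequence which possesses the complete Bessel biorthogonal sequence $(g_k)_{k=1}^\infty$ --- this is precisely condition (ii) of Theorem~\ref{th1} --- so $(f_k)_{k=1}^\infty$ is a Riesz basis for $\h$ and in particular complete. If a self-contained argument is preferred, it is enough to repeat the relevant part of the proof of Theorem~\ref{th1}: writing $T_F,T_G\colon\ell^2\to\h$ for the two synthesis operators (so that $T_G^*$ is the analysis operator $h\mapsto(\langle h,g_k\rangle)_{k=1}^\infty$), biorthogonality gives $T_G^*T_F\delta_k=T_G^*f_k=\delta_k$ for every $k\in\mn$, hence $T_G^*T_F=I_{\ell^2}$; and completeness of $(g_k)_{k=1}^\infty$ together with the computation \eqref{fg1}, applied to an arbitrary $f\in\h$, forces $T_FT_G^*=I_\h$. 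Thus $T_F$ is a bounded bijection of $\ell^2$ onto $\h$ with bounded inverse $T_G^*$, so $(f_k)_{k=1}^\infty=(T_F\delta_k)_{k=1}^\infty$ is a Riesz basis and therefore complete.

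I do not expect any real obstacle: the whole content is the remark that the argument for Theorem~\ref{th1} uses completeness of the biorthogonal partner but never completeness of the sequence itself, so it applies here with only a change of notation. The only minor technical points --- passing from $T_G^*T_F\delta_k=\delta_k$ on basis vectors to $T_G^*T_F=I_{\ell^2}$, and interchanging the infinite sum with the inner product in \eqref{fg1} --- are immediate from the boundedness of the synthesis operators of Bessel sequences recalled in the introduction.
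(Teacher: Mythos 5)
Your proposal is correct and follows essentially the same route as the paper: the paper's own proof likewise reduces to the surjectivity computation \eqref{fg1} from the proof of Theorem~\ref{th1}, using completeness of $(g_k)_{k=1}^\infty$ to conclude $f=T_FT_G^*f=\sum_{k=1}^\infty\langle f,g_k\rangle f_k$ and hence completeness of $(f_k)_{k=1}^\infty$, with the symmetry of the hypotheses handling the other direction. Your first variant (citing Theorem~\ref{th1} as a black box to get the full Riesz basis property) is just a slightly stronger packaging of the same argument.
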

\begin{proof} Without loss of generality, assume that $(g_k)_{k=1}^\infty$ is complete in $\h$.
Fix an arbitrary $f\in \h$. As in the proof of Theorem \ref{th1}, for every $f\in\h$ and every $j\in\mathbb N$, the biorthogonalify assumption leads to validity of (\ref{fg1}) 
and then the completeness of $(g_k)_{k=1}^\infty$ in $\h$ implies  that 
$f= T_F T_G^*f= \sum_{k=1}^\infty \langle f, g_k\rangle f_k$. Therefore, $(f_k)_{k=1}^\infty$ is complete in $\h$.
\end{proof}

Note that in the above statement it is essential to assume the Bessel condition for both sequences. If only one of the biorthogonal sequences is assumed to be Bessel, the conclusion of Proposition \ref{prop1} does not hold in general. 
Consider for example the Bessel sequence $(g_k)_{k=1}^\infty$ and the non-Bessel sequence $(f_k)_{k=1}^\infty$ in Example \ref{exYoung}. 
Notice also that the Bessel property is not necessary for completeness of the biorthogonal sequences. Consider for example the complete biorthogonal sequences $(\frac{1}{k}e_k)_{k=1}^\infty$ and  $(k e_k)_{k=1}^\infty$, only one of which is Bessel, or 
$(e_1, 2e_2, \frac{1}{3} e_3, 4e_4, \frac{1}{5}e_5, \ldots)$ and $(e_1,  \frac{1}{2} e_2,$ $ 3e_3, \frac{1}{4}e_4, 5e_5,\ldots)$,  none of which is Bessel.

\vspace{.1in}

As a consequence of Proposition \ref{prop1} and Theorem \ref{th1}, one can now write  the following characterization of Riesz bases,  
removing one (any one) of the completeness conditions from ($\mathcal{R}_6$).

\begin{thm} \label{thmnew}
 For a sequence $(f_k)_{k=1}^\infty$ in $\h$, the following conditions are equivalent:
\begin{itemize}

\item[{\rm (i)}] $(f_k)_{k=1}^\infty$ is a Riesz basis for H.

\item[{\rm (ii)}]  $(f_k)_{k=1}^\infty$ is a Bessel sequence in $\h$, it has a biorthogonal sequence $(g_k)_{k=1}^\infty$ which is also a Bessel sequence in $\h$, and one of $(f_k)_{k=1}^\infty$ and $(g_k)_{k=1}^\infty$  is complete in $\h$.
\end{itemize}
\end{thm}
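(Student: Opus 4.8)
The plan is to derive Theorem~\ref{thmnew} directly from Theorem~\ref{th1} and Proposition~\ref{prop1}, so that no genuinely new argument is needed beyond a short case distinction. The mathematical content already resides in those two results: Theorem~\ref{th1} handles the case in which the \emph{biorthogonal} sequence is the complete one, and Proposition~\ref{prop1} lets us reduce the remaining case to it.

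For the implication (i)~$\Rightarrow$~(ii) I would simply invoke Theorem~\ref{rbequiv}: if $(f_k)_{k=1}^\infty$ is a Riesz basis, then by ($\mathcal{R}_6$) it is a complete Bessel sequence possessing a biorthogonal sequence $(g_k)_{k=1}^\infty$ that is also a complete Bessel sequence, so in particular every requirement in (ii), including that at least one of the two sequences be complete, is met.

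For (ii)~$\Rightarrow$~(i) I would split according to which sequence is assumed complete. If $(g_k)_{k=1}^\infty$ is the complete one, then $(f_k)_{k=1}^\infty$ is a Bessel sequence with biorthogonal \emph{complete} Bessel sequence $(g_k)_{k=1}^\infty$, and Theorem~\ref{th1} immediately yields that $(f_k)_{k=1}^\infty$ is a Riesz basis. If instead $(f_k)_{k=1}^\infty$ is the complete one, I would first apply Proposition~\ref{prop1}, with the roles of the two biorthogonal Bessel sequences interchanged, to conclude that $(g_k)_{k=1}^\infty$ is complete as well; this brings us back to the previous case, and Theorem~\ref{th1} again applies.

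I do not expect any real obstacle. The only point requiring a little care is the symmetric bookkeeping: one must make sure that Proposition~\ref{prop1}, phrased as ``if one of them is complete then so is the other,'' is being used in the needed direction (from completeness of $(f_k)_{k=1}^\infty$ to completeness of $(g_k)_{k=1}^\infty$), which is legitimate precisely because its hypotheses are symmetric in the two sequences.
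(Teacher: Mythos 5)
Your proposal is correct and is exactly the argument the paper intends: the theorem is stated as ``a consequence of Proposition~\ref{prop1} and Theorem~\ref{th1},'' i.e.\ use Proposition~\ref{prop1} to transfer completeness from $(f_k)_{k=1}^\infty$ to $(g_k)_{k=1}^\infty$ when necessary, and then invoke Theorem~\ref{th1}. Your case distinction and the remark on the symmetry of Proposition~\ref{prop1} fill in the routine details in the same way.
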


\noindent
{\bf Applications.} 
For many structured systems, completeness is not a simple property to check.  
From this point of view, 
the removal of a verification for completeness from  ($\mathcal{R}_6$) can be very useful for simpler verification of the Riesz basis property.

On the other hand, Theorem \ref{thmnew} can be used to get conclusions  which can not be derived from the equivalence of ($\mathcal{R}_1$)  and ($\mathcal{R}_6$). 
  Consider for example the Gaussian $g(x) = e^{-\pi x^2}$, a
sequence $\Lambda$ of 
uniformly separated points from ${\mathbb R}^2$,  and the associated Gabor system
$G_{g,\Lambda} = (e^{2\pi i \mu\cdot} g(\cdot-\tau))_{(\tau,\mu)\in \Lambda}$. 
 It is well known that $G_{g,\Lambda}$  
 is a Bessel sequence (follows e.g. from \cite[Sec.\,5.1]{Heil2007}) but not a Riesz basis for $L^2(\mathbb R)$, see, e.g. \cite{SeipI92}. 
 However, for certain $\Lambda$, $G_{g,\Lambda}$ is a complete minimal system in $L^2(\mathbb R)$ - this is for example the case when $\Lambda=\{(-1,0), (1,0)\}\cup \{ (0, \pm\sqrt{2n}):n=1,2,3,\ldots\} \cup \{ (\pm\sqrt{2n},0):n=1,2,3,\ldots\}$ \cite{ALS2009}, or 
  when  $\Lambda$ is the lattice $\mathbb Z \times \mathbb Z$ without the point $(1,0)$,   see, e.g. \cite{LS1999}. 
For such $\Lambda$'s, based on Theorem \ref{rbequiv}, one can conclude that the biorthogonal system of $G_{g,\Lambda}$ is non-Bessel or non-complete,  
while Theorem \ref{thmnew} immediately implies that the biorthogonal system is non-Bessel. 
We also expect further applications of  Theorem \ref{thmnew} for  
other structured systems.

\vspace{.1in}\noindent
{\bf Acknowledgments} 
The author  acknowledges support from
the Austrian Science Fund (FWF) through the 
Project ``FLAME" Y 551-N13
and from the Vienna Science and Technology Fund (WWTF) through Project VRG12-009.

\vspace{.1in}
\noindent
Diana T. Stoeva\\
Austrian Academy of Sciences\\ 
Acoustics Research Institute\\
Wohllebengasse 12-14, Vienna 1040, Austria\\
dstoeva@kfs.oeaw.ac.at


\begin{thebibliography}{99}

\bibitem{ALS2009} 
G. Ascensi, Y. Lyubarskii, K. Seip, 
\textit{Phase space distribution of Gabor expansions.}
Appl. Comput. Harmon. Anal. {\bf 26}(2)(2009), 277--282.

\bibitem{Bari46} N. Bary, \textit{Sur les bases dans lespace de Hilbert}. C. R. (Dokl.) Acad. Sci. URSS {\bf 54}(1946), 379--382.

\bibitem{Bari51} 
N. K. Bari, \textit{Biorthogonal systems and bases in Hilbert space}, 
Mathematics.~Vol.~IV, 
Uch. Zap. Mosk. Gos. Univ. {\bf 148}(1951), 69--107.

\bibitem{Cbook}
O. Christensen,   \textit{An Introduction to Frames and Riesz Bases.} Second Expanded Edition, 
 Series: Applied and Numerical Harmonic Analysis, 
Birkh\"auser, Boston, 
2016.

\bibitem{Gelfand51}
I. M. Gel’fand, \textit{Remark on the work of N.K. Bari, “Biorthogonal
systems and bases in Hilbert space”}, 
Mathematics.~Vol.~IV, 
Uch. Zap. Mosk. Gos. Univ. {\bf 148}(1951),  224--225.

\bibitem{Heil2007} C. Heil,  \textit{History and Evolution of the
Density Theorem for Gabor Frames,} J. Fourier Anal. Appl. {\bf 13}(2)(2007), 113--166. 

\bibitem{Hbook} C. Heil,  \textit{A Basis Theory Primer.} Expanded ed., Birkh\"auser, Basel, 2011.

\bibitem{LS1999}
 Yu. Lyubarskii, K. Seip,   \textit{Convergence and summability of Gabor expansions at the Nyquist density}, J. Fourier Anal. Appl. {\bf 5}(1999), 127--157.

\bibitem{SeipI92} K. Seip, \textit{Density theorems for sampling and interpolation in the Bargmann–Fock space. I}, J. Reine Angew. Math. {\bf 429}(1992), 91--106.

\bibitem{Ybook} R. M. Young, \textit{An {I}ntroduction to {N}onharmonic {F}ourier {S}eries.}  Academic Press,  New York, 1980. 


\bibitem{Young81} R. M. Young, \textit{On complete biorthogonal systems.}  Proceedings of the American Mathematical Society {\bf 83}(3)(1981), 537--540. 

\end{thebibliography}
\end{document}